\documentclass{article}[12pt]
\usepackage{amsfonts,amsmath,amsthm,yfonts,mathdots,
latexsym,graphicx,amssymb,hyperref,url,fancyhdr,lineno,tikz,float}
\usepackage{amsfonts,amsmath,latexsym,hyperref,graphicx,amssymb,amsthm,url,lineno,float}
\usepackage[abbrev,msc-links]{amsrefs}  
\usetikzlibrary{arrows}
\usepackage{rotating}
\usetikzlibrary{calc}
\usetikzlibrary{positioning}
\usetikzlibrary{patterns}
\usetikzlibrary{shapes}
\usepackage{subcaption}


\title{Equally spaced collinear points in Euclidean Ramsey theory}

\author{Andrii Arman\thanks{University of Manitoba, \url{armana@myumanitoba.ca}}
\and Sergei Tsaturian\thanks{University of Manitoba, \url{tsaturis@cc.umanitoba.ca}}}
\date{11 May 2017}

\begin{document}
\maketitle

\begin{abstract}
It is proved that for $k\geq 4$, if the points of $k$-dimensional Euclidean space are coloured in red and blue, then there are either two red points distance one apart or $k+3$ blue collinear points with distance one between any two consecutive points. This result is new for $4\leq k\leq 10$.
\end{abstract}

\newtheorem{theorem}{Theorem}[section]
\newtheorem{lemma}[theorem]{Lemma}
\newtheorem{claim}[theorem]{Claim}
\newtheorem{corollary}[theorem]{Corollary}
\newtheorem{conjecture}{Conjecture}
\newtheorem{definition}[theorem]{Definition}
\newtheorem{problem} [theorem]{Problem}
\newtheorem{proposition} [theorem]{Proposition}
\newtheorem{example}[theorem]{Example}
\newtheorem{question}{Question}


\section{Introduction}
Let $\mathbb{E}^k$ be the $k$-dimensional Euclidean space and let $\ell_i$ denote the configuration of $i$ collinear points with distance $1$ between any two consecutive points. 
Say that two geometric configurations are congruent iff there exists an isometry (distance preserving bijection) between them. For $d\in\mathbb{Z}^+$, and geometric configurations $F_1$, $F_2$, let the notation $\mathbb{E}^d\rightarrow (F_1,F_2)$ mean that for any red-blue coloring of $\mathbb{E}^d$, either the red points contain a congruent copy of $F_1$, or the blue points contain a congruent copy of $F_2$.

It was asked by Erd\H os et al.~\cite{erdos2} if $\mathbb{E}^3\rightarrow (\ell_2,\ell_5)$ or even if $\mathbb{E}^2\rightarrow (\ell_2,\ell_5)$. The result of Iv\'an~\cite{ivan} implies the positive answer to the first question. Arman and Tsaturian~\cite{at} presented a simple proof of $\mathbb{E}^3\rightarrow (\ell_2,\ell_5)$ and proved a stronger result, namely that $\mathbb{E}^3\rightarrow (\ell_2,\ell_6)$. Tsaturian~\cite{tsaturian} proved that $\mathbb{E}^2\rightarrow (\ell_2,\ell_5)$. 

Denote by $m(k)$ the maximal number such that $\mathbb{E}^k\rightarrow (\ell_2, \ell_{m(k)})$, if it exists. Erd\H os and Graham~\cite{eg} claimed that $m(2)$ exists. The existence of $m(k)$ for all $k$ follows from a recent result by Conlon and Fox~\cite{cf}, who proved that $$(1+o(1))1.2^k<m(k)<10^{5k}.$$

In this short note, it is proved that for all $k \geq 4$, $m(k)\geq k+3$, which is better bound for small values of $k$, i.e. for $k\leq 10$. The techniques used here are not applicable when $k\leq 3$, so this note does not imply $\mathbb{E}^2\rightarrow (\ell_2,\ell_5)$ or $\mathbb{E}^3\rightarrow (\ell_2,\ell_6)$.

For a detailed overview of other results in Euclidean Ramsey theory, see Erd\H os et al.~\cite{erdos1}~\cite{erdos2}~\cite{erdos3} and Graham's survey \cite{graham}.

\section{Main result}
\begin{theorem}\label{thm:main}
For an integer $k \geq 4$, $\mathbb{E}^k\rightarrow (\ell_2, \ell_{k+3})$.
\end{theorem}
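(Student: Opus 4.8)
Suppose we have a red-blue coloring of $\mathbb{E}^k$ with no two red points at distance $1$. We want to produce $k+3$ collinear blue points consecutively spaced at distance $1$; call such a configuration $\ell_{k+3}$. The key observation is that the "red is sparse" hypothesis is strong: the red set is what one calls a set avoiding distance $1$, so in particular no unit-distance graph embedded in red can contain an edge. The strategy is to build a rigid finite configuration $\mathcal{C} \subset \mathbb{E}^k$ together with a family of unit-segment lines through it, so that whatever subset of $\mathcal{C}$ turns out to be red, it must be an independent set in the unit-distance graph on $\mathcal{C}$, and the structure of $\mathcal{C}$ forces one of the designated lines to be entirely blue.

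**Step 1: reduce to a combinatorial lemma about a cleverly chosen finite point set.** Fix a scale and consider a generic small simplex or regular simplex in $\mathbb{E}^k$; the extra dimensions (we have $k\ge 4$, growing with the target length $k+3$) are exactly what lets us append one more unit-segment "track" per dimension. I would set up a configuration consisting of several parallel copies of $\ell_m$ (for a suitable $m$ slightly less than $k+3$) together with a few auxiliary points, arranged so that corresponding points on different copies are at unit distance from each other. Then: if two corresponding points were both red, that is a red unit distance — contradiction. So on each "column" (the set of corresponding points across the parallel copies) at most one point is red; if we have more copies than the number of points per line that could individually be red, pigeonhole forces an all-blue line. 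Making this quantitative is where the bound $k+3$ (rather than, say, $k+2$ or $k+4$) comes out.

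**Step 2: the realizability / dimension bookkeeping.** The nontrivial geometric content is checking that the abstract incidence-and-distance pattern described in Step 1 actually embeds isometrically in $\mathbb{E}^k$. Concretely one wants $t$ translates of $\ell_m$ by vectors $v_1,\dots,v_t$ with all $\|v_i\|=1$ and $\|v_i-v_j\|=1$, i.e.\ the $v_i$ form a unit simplex, which needs $t-1 \le k$ dimensions, while $\ell_m$ itself only eats up one of the remaining directions — so the translation directions must be chosen orthogonal to the line, giving the constraint $t - 1 \le k - 1$, i.e.\ $t \le k$. Combining $t \le k$ parallel lines of length $m$, with the pigeonhole from Step 1 (one red point allowed per column, and we need enough columns/rows to force a monochromatic line), should be tuned so that $m + (\text{slack}) = k+3$. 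I'd expect the actual construction to be a single regular simplex $\Delta^{k}$ with $k+1$ vertices, several of whose edges are subdivided or extended into unit-segment lines — this is the natural way to get length exactly $k+3$ from $k+1$ vertices plus two "overhang" points at each end.

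**Main obstacle.** The real difficulty is not the pigeonhole but arranging the geometry so that a \emph{single} line is forced blue rather than merely getting an independent set with no obvious line in it: one must ensure that the red points, constrained to be a unit-distance-free set, cannot "spread out" across many lines so as to hit every candidate line at least once. This requires the candidate lines to overlap or share points in a controlled way (a design-like condition), and verifying that such an overlapping family of unit-spaced lines is simultaneously isometrically embeddable in only $k$ dimensions is the delicate step — it is exactly here that the method breaks for $k \le 3$, since there is not enough room, matching the remark in the introduction. I would organize the write-up as: (i) state and prove the embedding lemma (explicit coordinates for the configuration), (ii) state the pigeonhole/coloring lemma, (iii) combine them, handling the parity or small-case adjustments at the boundary value $k=4$ separately if needed.
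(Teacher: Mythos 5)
Your engine — parallel unit-spaced lines whose ``columns'' are unit simplices, so each column contains at most one red point, followed by pigeonhole — is indeed the core of one lemma in the paper, but as you have set it up the counting cannot reach $k+3$, and the ``slack'' you hope to tune does not exist. If you take $t$ parallel copies of $\ell_m$ whose columns are pairwise at unit distance, the translation vectors form a unit simplex, so $t-1\le k-1$ (or at best $t\le k+1$ if you do not insist on orthogonality to the line), while your pigeonhole needs strictly more lines than columns that could carry a red point, i.e. $t>m$. This gives at most a blue $\ell_m$ with $m$ around $k$, well short of $\ell_{k+3}$. The paper closes this gap with two ingredients missing from your sketch. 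First, a sphere lemma: if $\mathbb{E}^{k-1}$ has no two red points at distance $1$, then every $(k-2)$-sphere of radius $\sqrt{3}/2$ contains an entirely blue unit simplex $\Delta^{k-2}$ (proved by showing that otherwise redness would propagate by reflections/rotations to cover the whole sphere, forcing a red unit distance). Second, and decisively, the parallel-lines argument is not run on an arbitrary configuration but anchored at \emph{two red points at an integer distance} $d$ with $2\le d\le k+1$: placing the $k+3$ columns on the radius-$\sqrt{3}/2$ spheres in the hyperplanes $x_1=0,1,\dots,k+2$, the columns at $x_1=0,1,d,d+1$ are entirely blue for free (each such sphere is at distance $1$ from a red point), and one further column is entirely blue by the sphere lemma; only $k-2$ dangerous columns remain against $k-1$ candidate lines of length $k+3$, and the pigeonhole closes.

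Your plan also does not address the complementary case, which the paper must (and does) handle separately: when no two red points are at any integer distance $2,\dots,k+1$, there is no pair of red anchors to create the free blue columns. There the paper argues directly: all spheres of radius $1,\dots,k+1$ about a red point $A$ are entirely blue; on the sphere of radius $k+2$ one finds two points $P_1,P_2$ at distance $\frac{k+2}{k+3}$ that are both blue (choosing them at distance $1$ from a red point of that sphere if one exists), their radial projections $Q_1,Q_2$ to the sphere of radius $k+3$ are at distance $1$, hence one is blue, and the ray from $A$ through that blue $Q_i$ meets each sphere of radius $1,\dots,k+3$ in a blue point, giving the blue $\ell_{k+3}$. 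Without the integer-distance dichotomy and the sphere lemma, your pigeonhole bookkeeping cannot be made to produce $k+3$, and the ``design-like overlapping family of lines'' you gesture at is not needed in the actual argument.
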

The following notation and preliminary lemmas are needed to prove Theorem~\ref{thm:main}. Denote by $\Delta^{k}$ any set of $k+1$ points in $\mathbb{E}^k$ such that distance between any two points in $\Delta^{k}$ is equal to one. In other words, $\Delta^{k}$ is a vertex set of a unit regular $k$-dimensional simplex in $\mathbb{E}^k$.
\begin{lemma}\label{lemma:s}
Let $k\geq 4$ and let the Euclidean space $\mathbb{E}^{k-1}$ be coloured in red and blue so that there are no two red points distance $1$ apart. Let $S^{k-2}$ be a $(k-2)$-dimensional sphere of radius $\frac{\sqrt{3}}{2}$ with the centre at point $O$. Then there is a copy of $\Delta^{k-2}\subset S^{k-2}$ all points of which are blue.
\end{lemma}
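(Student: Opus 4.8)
The plan is to use the elementary fact that, since no two red points are at distance $1$, the unit sphere centred at any red point is entirely blue, and to apply this on $S^{k-2}$ itself. First dispose of the trivial case: if $S^{k-2}$ contains no red point at all, it is entirely blue; since the circumradius $\sqrt{(k-2)/(2(k-1))}$ of $\Delta^{k-2}$ is smaller than $\tfrac{\sqrt3}{2}$, a copy of $\Delta^{k-2}$ can be placed with all its vertices on a suitable ``latitude'' $(k-3)$-subsphere of $S^{k-2}$, and it is blue. So from now on assume there is a red point $a\in S^{k-2}$.

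Every point at distance $1$ from $a$ is blue; in particular the set $L_a:=\{x\in S^{k-2}:|xa|=1\}$ is entirely blue, and a short computation identifies $L_a$ as a $(k-3)$-sphere of radius $\sqrt{2/3}$ (here the hypothesis $k\ge4$ is used to guarantee $L_a$ is at least a circle). Since $\sqrt{2/3}$ exceeds the circumradius of $\Delta^{k-3}$, one can choose a copy $F$ of $\Delta^{k-3}$ (that is, $k-2$ points pairwise at distance $1$) with all vertices on $L_a$; lying on $L_a$, $F$ is blue. Now the locus of points at distance $1$ from all vertices of $F$ is a circle, which meets $S^{k-2}$ in exactly two points — the two points of $S^{k-2}$ completing $F$ to a copy of $\Delta^{k-2}$. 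One of them is $a$ itself (it lies on $S^{k-2}$ and is at distance $1$ from every vertex of $F\subseteq L_a$); call the other $c(F)$. If $c(F)$ is blue for some admissible $F$, then $F\cup\{c(F)\}$ is a blue copy of $\Delta^{k-2}$ contained in $S^{k-2}$, and we are done.

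The heart of the matter is to rule out the remaining possibility, that $c(F)$ is red for every admissible $F$. The assignment $F\mapsto c(F)$ is equivariant under the group $\mathrm{O}(k-2)$ of rotations of $\mathbb{E}^{k-1}$ fixing both $O$ and $a$: such rotations preserve $S^{k-2}$ and $L_a$ and permute the copies $F$, and since they fix $a$ they carry the ``other'' completion point to the ``other'' completion point. A computation shows $|a\,c(F)|$ equals the same value $\sqrt{2(k+1)/k}$ for every $F$, so $c(F)$ always lies at a fixed angular distance from $a$ on $S^{k-2}$. Hence $\{c(F):F\}$ fills an entire latitude $(k-3)$-sphere of $S^{k-2}$, and one computes its radius to be $\sqrt{6(k-2)(k+1)}/(3k)$, which exceeds $\tfrac12$ for every $k\ge3$. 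This latitude sphere has dimension $k-3\ge1$, hence is connected, and having diameter greater than $1$ it contains two points at distance exactly $1$. If every $c(F)$ were red these would be two red points at distance $1$, contradicting the hypothesis. Therefore some $c(F)$ is blue, which completes the proof.

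The computations just invoked — the radius of $L_a$, the constancy of $|a\,c(F)|$, the radius of the latitude swept out by the $c(F)$, and the inequality $\sqrt{6(k-2)(k+1)}/(3k)>\tfrac12$ — are routine but must be checked carefully; the one point to be sure of is that $c(F)\ne a$ for every $F$, so that the points $c(F)$ genuinely fill a whole latitude rather than collapsing onto $a$. The restriction $k\ge4$ is exactly what makes both $L_a$ and that latitude positive-dimensional, hence connected; this is also why the method gives nothing in $\mathbb{E}^2$ or $\mathbb{E}^3$.
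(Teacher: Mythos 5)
Your proof is correct, and its core construction coincides with the paper's: around a red point $a$ the unit-distance set $L_a\subset S^{k-2}$ is blue, you inscribe a unit simplex $F$ (a copy of $\Delta^{k-3}$) in $L_a$, and your second completion point $c(F)$ is exactly the paper's reflected point $A_{k-1}$ (the mirror image of $a$ in the hyperplane through $O$ and the affine hull of $F$), whose orbit under the rotations fixing $O$ and $a$ sweeps a full latitude sphere. Where you genuinely diverge is the endgame. The paper stays qualitative: it only needs that the latitude sits at some positive angle $\theta$ from the antipode, and then iterates the construction to spread redness over hypercaps of angle $2\theta,4\theta,\dots$ until the whole sphere is red, which yields two red points at distance $1$. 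You instead make the single latitude do all the work: computing $|a\,c(F)|=\sqrt{2(k+1)/k}$ and the latitude radius $\sqrt{6(k-2)(k+1)}/(3k)>\tfrac12$ shows this latitude already contains two points at distance exactly $1$, so it cannot be entirely red and some $c(F)$ must be blue. This buys a shorter argument with no iteration or cap bookkeeping (and you also dispose of the no-red-point case explicitly, which the paper leaves implicit), at the price of explicit distance computations that the paper's soft argument avoids; I checked the deferred computations, including the non-tangency fact $c(F)\neq a$ (equivalently, the two circles in the plane through $O$ and the circumcenter of $F$ meet transversally for all $k\geq 4$), and they all hold, so the quantitative route is sound.
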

\begin{proof}
Assume the contrary, namely that there is no blue $\Delta^{k-2}\subset S^{k-2}$. Since all points in $\Delta^{k-2}$ are distance one to each other, it is equivalent to assume that any $\Delta^{k-2}\subset S^{k-2}$ contains exactly one red point. The following claim is the main part of the proof.\\

\noindent
\textsc{Claim.} There is an angle $\theta>0$, such that if $A$ is red point on $S^{k-2}$ and $B$ is antipodal point to $A$, then all points $C$ on $S^{k-2}$, such that $\angle COB=\theta$, are red. 

\noindent
\textsc{Proof of the claim.}
Let $A$ and $B$ be antipodal points on $S^{k-2}$ and let $A$ be red. Let $X$ be the set of points in $S^{k-2}$ that are at distance $1$ to $A$. Then $X$ is a $(k-3)$-dimensional sphere with radius $\sqrt{\frac{2}{3}}$. Let $A_1, A_2, \cdots A_{k-2}\in X$ be such that $\{A,A_1, A_2, \cdots A_{k-2}\}$ is a copy of $\Delta^{k-2}$. Since any simplex $\Delta^{k-2}$ contains exactly one red point and point $A$ is red, all points $A_1, A_2, \cdots A_{k-2}$ are blue. Let $A_{k-1}$ be the point symmetric to $A$ through the hyperplane $\pi$ spanned by points $A_1, A_2, \cdots A_{k-2}, O$. The point $A_{k-1}$ belongs to $S^{k-2}$ and is red, since $\{A_1, A_2, \cdots A_{k-1}\}$ is a copy of $\Delta^{k-2}$. Let $\theta= \angle A_{k-1}OB$, then $\theta>0$, because $\pi$ does not contain $X$. When the points $A_1, A_2, \cdots, A_{k-2}$ are rotated in $S^{k-2}$, the point $A_{k-1}$ spans the set of all points $C\in S^{k-2}$, such that $\angle COB=\theta$. This concludes the proof of the claim.\\

Let $A$ be a red point on $S^{k-2}$ and let $B$ be the antipodal point to $A$ on $S^{k-2}$.
Let $S^{k-3}_A\subset S^{k-2}$ be the set of all points $C$, such that $\angle COB=\theta$. By the Claim, all points of $S^{k-3}_A$ are red. For a point $C\in S^{k-3}_A$ let $C_1$ be the antipodal point on $S^{k-2}$. Let $S^{k-3}_C\subset S^{k-2}$ be the set of points $D$, such that $ \angle DOC_1=\theta$. By the Claim, the set $S^{k-3}_C$ contains only red points.
For a positive angle $\phi$, define a "hypercap" $HC_{A}(2\phi)=\{D\in S^{k-2} : \; \angle DOA \leq \phi\}$.
When $C$ is rotated in $S^{k-3}_A$, red hyper-circles $S^{k-3}_C$ span the red hypercap $HC_{A}(2\theta).$

The argument in last paragraph shows that if $A$ is a red point, then $HC_A(2\theta)$ is red. By reapplying this statement to any point in $HC_A(2\theta)$, it can be proved that the set $HC_{A}(4\theta)$ is red, the set $HC_{A}(8\theta)$ is red, and eventually the whole sphere $S^{k-2}$ is red. Hence, $S^{k-2}$ contains two red points distance 1 apart, which contradicts the assumption that $S^{k-2}$ does not contain a blue $\Delta^{k-2}$.
\end{proof}

For a positive integer $n$, denote by $[n]$ the set of all positive integers $i\leq n$.
\begin{lemma}\label{lemma:nored}
Let $\mathbb{E}^k$ be coloured in red and blue so that there is no red $\ell_2$. If there exists an integer $d$, $2\leq d\leq k+1$, and two red points distance $d$ apart, then there exists a blue $\ell_{k+3}$.
\end{lemma}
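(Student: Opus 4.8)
The plan is to choose the red pair as economically as possible, clean up the segment between the two red points, and then manufacture a single blue line of length $k+3$ that runs parallel to, but disjoint from, the segment joining them.

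\emph{Reduction.} Among all pairs of red points whose distance is an integer in $[2,k+1]$, pick $P,Q$ realising the smallest such integer $d$, and place them on a line with unit-spaced lattice points $R_0=P,R_1,\dots,R_d=Q$. Then $R_1$ and $R_{d-1}$ are blue, being at distance $1$ from a red point, and each $R_i$ with $2\le i\le d-2$ is blue as well, since otherwise $P$ and $R_i$ would be red at integer distance $i\in[2,d-1]$, contradicting minimality of $d$. So $R_1,\dots,R_{d-1}$ is a blue $\ell_{d-1}$; but it cannot be extended along this line, because $P$ and $Q$ block both ends, and $d-1\le k$ is too short. A different line is needed, using the dimensions transverse to $PQ$ together with Lemma~\ref{lemma:s}.

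\emph{The blue line.} I would take a line $\ell'$ parallel to line $PQ$ at distance exactly $\tfrac{\sqrt3}{2}$, with unit-spaced points $Y_i$ placed so that the foot of the perpendicular from $P$ onto $\ell'$ is the midpoint of two consecutive points $Y_a,Y_{a+1}$. Since line $PQ$ and $\ell'$ are parallel, the foot of $Q$ is then automatically the midpoint of $Y_{a+d},Y_{a+d+1}$; and because $|PY_a|=|PY_{a+1}|=|QY_{a+d}|=|QY_{a+d+1}|=\sqrt{\tfrac14+\tfrac34}=1$ while the unit spheres about the red points $P$ and $Q$ are entirely blue, these four points of $\ell'$ are blue. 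The hypothesis $d\le k+1$ ensures that $Y_a,Y_{a+1},\dots,Y_{a+d+1}$ is a run of $d+2\le k+3$ consecutive unit-spaced points, which I extend to a run of exactly $k+3$; of these, $k-1$ are not yet certified. To finish I would show each such uncommitted point $Y$ of $\ell'$ is blue by contradiction: if $Y$ is red then its unit sphere is blue, and inside a hyperplane through $Y$ transverse to $\ell'$ the radius-$\tfrac{\sqrt3}{2}$ sphere supplied by Lemma~\ref{lemma:s} carries a blue $\Delta^{k-2}$; combining that blue simplex with the blue points of $\ell'$ already certified near $Y$ and with points on the blue unit sphere of the (hypothetically red) $Y$ should force two red points at distance $1$, contradicting that there is no red $\ell_2$.

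\emph{Expected obstacle.} The crux is exactly this last certification. Lemma~\ref{lemma:s} only asserts that \emph{some} rotated copy of $\Delta^{k-2}$ on a given sphere is monochromatically blue, with no control over its position, so one cannot simply stack blue simplices to build $\ell'$; the blueness of each interior point has to be wrung out of the ``exactly one red point per unit simplex'' rigidity that drives Lemma~\ref{lemma:s}, applied now against the hypothetically red $Y$, and this must work uniformly for every admissible $d$ from $2$ to $k+1$. Arranging the transverse hyperplanes, the $\tfrac{\sqrt3}{2}$-spheres, and the already-known blue points so that they interlock to produce a red $\ell_2$ is, I expect, the bulk of the proof; the rest is the bookkeeping above, whose arithmetic (four points certified by $P,Q$, exactly $k-1$ left, run length $d+2\le k+3$) is what forces both the target $\ell_{k+3}$ and the restriction $d\le k+1$.
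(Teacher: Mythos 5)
There is a genuine gap, and you have located it yourself: the entire content of the lemma is the step you defer (``certifying each uncommitted point of $\ell'$ is blue''), and that step, as you have set it up, is not merely hard but false. Once you fix a particular line $\ell'$ parallel to $PQ$ at distance $\tfrac{\sqrt3}{2}$, nothing in the hypotheses forces an individual uncommitted point $Y$ of $\ell'$ to be blue: colour red exactly the three points $P$, $Q$ and one uncommitted $Y_0$ on your chosen $\ell'$. Since $|PY_0|$ and $|QY_0|$ are of the form $\sqrt{(m+\tfrac12)^2+\tfrac34}$ with $m\notin\{0,-1\}$ relative to the respective foot, no two of the three red points are at distance $1$, so the hypotheses of the lemma hold, yet $Y_0$ is red. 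Hence no argument by contradiction can show that every point of a line prescribed in advance is blue; the contradiction you hope to extract from Lemma~\ref{lemma:s} and the blue unit sphere of $Y$ simply does not exist. (The preliminary ``minimality of $d$'' reduction and the blue run $R_1,\dots,R_{d-1}$ on the line $PQ$ are also never used.)

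What is missing is the freedom to choose \emph{which} parallel line to use, and this is exactly how the paper proceeds. Place the red points at $(\tfrac12,0,\dots,0)$ and $(d+\tfrac12,0,\dots,0)$, so that for each of the four integer cross-sections $x_1\in\{0,1,d,d+1\}$ the whole radius-$\tfrac{\sqrt3}{2}$ sphere $S^{k-2}_j$ centred on the axis is at distance $1$ from a red endpoint and hence entirely blue. Apply Lemma~\ref{lemma:s} \emph{once}, in a single hyperplane $x_1=i$ with $i\in[k+2]\setminus\{1,d,d+1\}$, to get one blue $\Delta^{k-2}\subset S^{k-2}_i$; its $k-1$ vertices, translated along the $x_1$-axis to all positions $0,1,\dots,k+2$, give $k-1$ candidate blue lines of $k+3$ unit-spaced points. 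At each of the remaining $k-2$ positions $j\in[k+2]\setminus\{d,d+1,i\}$ the translated vertex set is again a unit simplex, so (having no red $\ell_2$) it contains at most one red point and thus spoils at most one of the $k-1$ candidate lines. Pigeonhole then leaves a vertex index $s$ whose entire translated orbit is blue, which is the desired $\ell_{k+3}$. Your sketch invokes Lemma~\ref{lemma:s} but never sets up this counting over several candidate lines, and without it the approach cannot be completed.
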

\begin{proof}

Let $A_0$ and $A_d$ be two red points distance $d$ apart. Assume that $A_0=(\frac{1}{2}, 0, \cdots, 0)$ and $A_d=(d+\frac{1}{2}, 0 \cdots, 0)$. 

For $0 \leq j \leq k+2$ define 
$$S^{k-2}_j=\{(j,x_2,\cdots,x_k): \; x_{2}^2+\cdots +x_k^2=\frac{3}{4}\}.$$ 
Note that $S^{k-2}_{0}$ and $S^{k-2}_{1}$ contain only blue points, since any point in $S^{k-2}_{0}$ or $S^{k-2}_{1}$ is distance one to $A_0$. For the same reason, sets $S^{k-2}_d$ and $S^{k-2}_{d+1}$ contain only blue points. Let $i\in[k+2]$ be a number not equal to $1,d$ or $d+1$.  By Lemma~\ref{lemma:s} applied to the hyperspace $x_{1}=i$ and $S^{k-2}_{i}$, there is a blue $\Delta^{k-2}\subset S^{k-2}_{i}$. Let $\Delta^{k-2}=\{A^{i}_{1}, A^{i}_{2}, \cdots A^{i}_{k-1} \}$. For all $0 \leq j \leq k+2$ and $s\in[k-1]$, define $$A^{j}_{s}=A^{i}_s+(j-i,0,0, \cdots, 0).$$
Let $C=[k+2]\backslash \{d, d+1, i\}$. For each $j \in C$, the set $\{A^{j}_1, \cdots A^{j}_{k-1}\}$ is a copy of $\Delta^{k-2}$, and therefore contains at most one red point. Since there are $k-2$ possible choices for $j\in C$ and there are $k-1$ possible choices for $s\in[k-1]$, there is an $s\in[k-1]$, such that for all $j\in C$, point $A^{j}_{s}$ is blue. Hence, points $A_{s}^{0}, A_{s}^{1}, \cdots , A_{s}^{k+2}$ are all blue and form a blue $\ell_{k+3}$.
\end{proof}
\noindent
{\bf Proof of Theorem~\ref{thm:main}.}
Assume the contrary, that there is a colouring of $\mathbb{E}^{k}$ in red and blue, such that there is neither red $\ell_2$, nor blue $\ell_{k+3}$.

According to Lemma~\ref{lemma:nored}, there are no two red points distance $1,2, \cdots, k+1$ apart. Let $A$ be a red point. Then for all $j\in [k+1]$, the sphere $$S^{k-1}(j)=\{X\in \mathbb{E}^k: |XA|=j\}$$ 
contains only blue points. Let $S^{k-1}(k+2)=\{X\in \mathbb{E}^k: |XA|=k+2\}$ and $S^{k-1}(k+3)=\{X\in \mathbb{E}^k: |XA|=k+3\}$. There are two cases to consider.

If $S^{k-1}(k+2)$ contains only blue points, let $P_{1}$ and $P_{2}$ be two points on $S^{k-1}(k+2)$, such that $|P_1P_2|=\frac{k+2}{k+3}$.
If $S^{k-1}(k+2)$ contains a red point $B$, let $P_{1}$ and $P_{2}$ be two points on $S^{k-1}(k+2)$, such that $|P_1P_2|=\frac{k+2}{k+3}$ and $|BP_1|=|BP_2|=1$.
In any case, both $P_1$ and $P_2$ are blue.

Let the lines $AP_1$ and $AP_2$ intersect hypersphere $S^{k-1}(k+3)$ at points $Q_1$ and $Q_2$ respectively. Then, $|Q_1Q_2|=1$, so one of the points, say, $Q_1$, is blue. For all $j\in[k+3]$ the line $AQ_1$ intersects the sphere  $S^{k-1}_{j}$ at a blue point, so the points of intersections form a blue $\ell_{k+3}$.
\qed

\section{Concluding remarks}
The result of Conlon and Fox~\cite{cf} (as well as the result of this note) implies that for any $k$, there is $n$ such that $\mathbb{E}^n\rightarrow (\ell_2,\ell_k)$. One of the results of Erd\H os et al.~\cite{erdos1} implies that for all $n$, $\mathbb{E}^n\not\rightarrow (\ell_6,\ell_6)$. This motivates the following question: what is the minimal $s$ such that there exists $k$ such that for all $n$, $\mathbb{E}^n\not\rightarrow (\ell_s,\ell_k)$? We conjecture that $s=3$:

\begin{conjecture}
There is an integer $k$, such that for every integer $n$ 
$$\mathbb{E}^n\not \rightarrow (\ell_3,\ell_k).$$
\end{conjecture}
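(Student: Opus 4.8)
Since this is a conjecture rather than a settled result, what follows is a proposed line of attack. Unwinding the definition, proving it means producing, for every integer $n$, a red--blue colouring of $\mathbb{E}^n$ with no red $\ell_3$ and no blue $\ell_k$, for one value of $k$ that does not depend on $n$. The first step is the reduction to a one-dimensional problem: a colouring of $\mathbb{E}^n$ has this property if and only if, on every affine line $L$, the colouring of $\mathbb{R}$ that $L$ inherits (via an arc-length parametrisation) contains no red triple $t,t+1,t+2$ and no blue tuple $t,t+1,\dots,t+k-1$ for any real $t$. On $\mathbb{R}$ itself the colouring ``red if and only if $\lfloor t\rfloor$ is even'' already does the job (even with $k=2$), so the entire difficulty is to build a single colouring of $\mathbb{E}^n$ whose restriction to every line, in every direction, is simultaneously good in this sense.

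The natural starting point is the colouring Erd\H os et al.~\cite{erdos1} use to prove $\mathbb{E}^n\not\rightarrow(\ell_6,\ell_6)$, with the aim of sharpening the red side from $\ell_6$ down to $\ell_3$. I would look for colourings driven by the function $x\mapsto\|x\|^2$, for two reasons. First, its restriction to a line $x_0+tu$ with $\|u\|=1$ is the monic quadratic $t^2+2\langle x_0,u\rangle t+\|x_0\|^2$, and this description is identical in every dimension, so a colouring of the form $c(x)=\chi(\lfloor\|x\|^2\rfloor)$ is automatically dimension-independent --- which is exactly the uniformity in $n$ that the conjecture demands. Second, along any line the values of $\|x\|^2$ at unit-spaced points have second difference exactly $2$, so the three values attached to any red $\ell_3$ have the form $v,\,v+e,\,v+2e+2$; taking floors, ``no red $\ell_3$'' becomes the purely arithmetic requirement that the red residue set $A\subseteq\mathbb{Z}$ contain no triple $(a,b,c)$ with $a+c-2b\in\{1,2,3\}$ (in particular $A$ may contain no two consecutive integers, which is the instance where the middle point is the foot of the perpendicular from the origin to the line). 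A set such as $A=4\mathbb{Z}$ meets this condition outright, so arranging ``no red $\ell_3$'' is the easy half.

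The obstacle is the blue side, and it is a real one. The red condition forces $A$ to be sparse and anti-progressive (no two consecutive integers, no near-$3$-term progression), hence its complement is dense; worse, along lines that pass close to the origin the values of $\|x\|^2$ at unit-spaced points take the shape $\rho^2+(j+\alpha)^2$, and by choosing $\rho$ and the offset $\alpha$ one can confine their fractional parts, and thus the residues of their floors, to a very small target set --- so such a line can be blue on arbitrarily long unit-spaced runs. For the simplest candidates (e.g.\ $A=4\mathbb{Z}$, which makes every near-vertex line with $\lfloor\rho^2\rfloor\equiv1\pmod 4$ entirely blue) this already produces an infinite blue line. Thus the genuine content of the conjecture is a balancing act: $A$ must be sparse and anti-progressive enough to destroy every red $\ell_3$, yet must meet each of these ``parabola-sampled'' sequences within a bounded number of steps to destroy every long blue $\ell_k$. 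I expect that no colouring which is a function of $\|x\|^2$ alone can achieve this; a workable construction probably has to combine several quadratic functionals --- for instance $\|x-p_1\|^2,\dots,\|x-p_m\|^2$ for a few well-chosen centres, so that a line lying near the vertex of one is steep with respect to another --- or to use a more delicate, non-periodic colouring. Controlling the blue runs uniformly over all line positions and all dimensions at once is the step I expect to be the main difficulty.

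Finally, some weaker statements look like sensible stepping stones: establishing $\mathbb{E}^n\not\rightarrow(\ell_4,\ell_k)$ or $\mathbb{E}^n\not\rightarrow(\ell_5,\ell_k)$ with $k$ absolute (the arithmetic constraint on $A$ is milder there), or proving the conjectured bound for all $n$ below a prescribed value, or for an infinite but sparse set of dimensions. Any of these would already shed light on where the threshold $s=3$ sits.
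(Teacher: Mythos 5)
You are addressing the one statement in the paper that has no proof: it is posed as an open conjecture, and the authors note that Conlon and Fox~\cite{cf} arrived at the same conjecture independently, so there is no argument in the paper to compare yours with. Judged on its own terms, your submission is a plan of attack rather than a proof, as you acknowledge. What is correct and worth keeping: the reduction to lines (a congruent copy of $\ell_m$ is exactly $m$ unit-spaced points on an affine line), the observation that $\|x\|^2$ restricted to unit-spaced points of any line has second difference exactly $2$ in every dimension, and the resulting arithmetic criterion that a colouring $c(x)=\chi(\lfloor\|x\|^2\rfloor)$ has no red $\ell_3$ whenever the red residue set $A\subseteq\mathbb{Z}$ contains no triple $a,b,c$ with $a+c-2b\in\{1,2,3\}$ (so, for instance, $A=4\mathbb{Z}$ works). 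This is a sensible attempt to sharpen the colouring behind $\mathbb{E}^n\not\rightarrow(\ell_6,\ell_6)$ from~\cite{erdos1}.

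The genuine gap is the entire blue half of the argument, which you identify but do not close. Your own computation shows that the single-functional colourings fail: with $A=4\mathbb{Z}$, a line at distance $\rho$ from the origin with $\lfloor\rho^2\rfloor\equiv 1\pmod 4$, and with the unit-spaced points placed so that one of them is the foot of the perpendicular, has floors $\lfloor\rho^2\rfloor+j^2\equiv 1\text{ or }2\pmod 4$ at every point, hence carries a blue $\ell_k$ for every $k$. Any replacement for $A$ must still meet every ``parabola-sampled'' sequence $\lfloor\rho^2+(j+\alpha)^2\rfloor$ within a window of bounded length, uniformly in $\rho$, $\alpha$, the line, and the dimension; this is precisely the open content of the conjecture, and nothing in your note establishes it. The multi-centre idea (using $\|x-p_1\|^2,\dots,\|x-p_m\|^2$) is stated without analysis: you give no argument that a line lying near the vertex of one parabola must hit the red set governed by another within a bounded number of steps, no rule for combining the functionals that preserves the red-side property, and no candidate value of $k$. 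Until a concrete colouring is produced and the bounded-blue-run property is verified for all lines simultaneously, the conjecture remains exactly as open as it is in the paper; the weaker targets you list ($\ell_4$ or $\ell_5$ on the red side, or restricted sets of dimensions) are reasonable stepping stones, but none of them is carried out here either.
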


During the preparation of this note, the paper of Conlon and Fox~\cite{cf} appeared, where the authors made a similar conjecture.

\section{Acknowledgments}
We would like to thank David Gunderson for valuable comments.

\begin{bibdiv}
\begin{biblist}[\normalsize]

\bib{at}{article}{
   author={Arman, A.},
   author={Tsaturian, S.},
   title={A result in asymmetric Euclidean Ramsey theory, \emph{preprint available at https://arxiv.org/abs/1702.04799, accessed 9 May 2017}},
}

\bib{cf}{article}{
   author={Conlon, D.},
   author={Fox, J.},
   title={Lines in Euclidean Ramsey theory, \emph{preprint available at https://arxiv.org/abs/1705.02166, accessed 9 May 2017}},
}

\bib{eg}{article}{
   author={Erd\H os, P.},
   author={Graham, R. L.},
   title={\emph{Old and new problems and results in combinatorial number theory:
   van der Waerden's theorem and related topics}},
   journal={\emph{Enseign. Math. (2)}},
   volume={25},
   date={1979},
   pages={325--344},
}
\bib{erdos1}{article}{
   author={Erd\H os, P.},
   author={Graham, R. L.},
   author={Montgomery, Paul},
   author={Rothschild, B. L.},
   author={Spencer, J.},
   author={Straus, E. G.},
   title={\emph{Euclidean Ramsey theorems. I}},
   journal={\emph{J. Combin. Theory Ser. A}},
   volume={14},
   date={1973},
   pages={341--363},
}

\bib{erdos2}{article}{
   author={Erd\H os, Paul},
   author={Graham, R. L.},
   author={Montgomery, P.},
   author={Rothschild, B. L.},
   author={Spencer, J.},
   author={Straus, E. G.},
   title={\emph{Euclidean Ramsey theorems. II}},
   conference={
      title={\emph{Infinite and finite sets (Colloq., Keszthely, 1973; dedicated
      to P. Erd\H os on his 60th birthday), Vol. I}},
   },
   book={
      publisher={North-Holland, Amsterdam},
   },
   date={1975},
   pages={529--557. Colloq. Math. Soc. J\'anos Bolyai, Vol. 10},
}

\bib{erdos3}{article}{
   author={Erd\H os, P.},
   author={Graham, R. L.},
   author={Montgomery, P.},
   author={Rothschild, B. L.},
   author={Spencer, J.},
   author={Straus, E. G.},
   title={\emph{Euclidean Ramsey theorems. III}},
   conference={
      title={\emph{Infinite and finite sets (Colloq., Keszthely, 1973; dedicated
      to P. Erd\H os on his 60th birthday), Vol. I}},
   },
   book={
      publisher={North-Holland, Amsterdam},
   },
   date={1975},
   pages={559--583. Colloq. Math. Soc. J\'anos Bolyai, Vol. 10},
}

\bib{graham}{book}{
   title={\emph{Euclidean Ramsey theory, in} Handbook of discrete and computational geometry \emph{(J. E. Goodman and J. O'Rourke, Eds.)}},
   edition={2},
   author={R. L. Graham},
   editor={},
   publisher={Chapman \& Hall/CRC, Boca Raton, FL},
   date={2004},
}

\bib{ivan}{thesis}{
   author={Iv\'an, L\'aszl\'o},
   title={Monochromatic point sets in the plane and in the space},
   date={1979},
   note={Master’s Thesis, University of Szeged, Bolyai Institute (in Hungarian)}
}

\bib{tsaturian}{article}{
   author={Tsaturian, S.},
   title={A Euclidean Ramsey result in the plane, \emph{preprint available at https://arxiv.org/abs/1703.10723, accessed 9 May 2017}},
}

\end{biblist}
\end{bibdiv}

\end{document}